\theoremstyle{plain}
\newtheorem{theorem}{Theorem}[section]
\newtheorem{lemma}[theorem]{Lemma}
\newtheorem{observation}[theorem]{Observation}
\newtheorem{conjecture}[theorem]{Conjecture}
\theoremstyle{definition}
\newtheorem{question}[theorem]{Question}
\newtheorem{def/prop}[theorem]{Definition/Proposition}
\pgfplotsset{compat = 1.16}
\DeclareMathOperator{\Pop}{\mathrm{Pop}}
\DeclareMathOperator{\Id}{\mathrm{Id}}
\DeclareMathOperator{\Inv}{\mathrm{Inv}}
\DeclareMathOperator{\Prob}{\mathbb{P}}
\DeclareMathOperator{\eps}{\varepsilon}
\newcommand{\cF}{\mathcal{F}}
\newcommand{\cS}{\mathcal{S}}
\title{\scshape Lower bound on the running time of Pop-Stack Sorting on a random permutation}
\author{Lyuben Lichev}
\affil{Department of Mathematics, Univ. Jean Monnet, Saint-Etienne, France}
\begin{document}

\maketitle

\begin{abstract}
Pop-Stack Sorting is an algorithm that takes a permutation as an input and sorts its elements. It consists of several steps. At one step, the algorithm reads the permutation it has to process from left to right and reverses each of its maximal decreasing subsequences of consecutive elements. It terminates at the first step that outputs the identity permutation.

In this note, we answer a question of Defant on the running time of Pop-Stack Sorting on the uniform random permutation $\sigma_n$. More precisely, we show that there is a constant $c > 0.5$ such that asymptotically almost surely, the algorithm needs at least $cn$ steps to terminate on $\sigma_n$.
\end{abstract}
\noindent
Keywords: Pop-Stack Sorting, random permutation, lower bound, decreasing subsequence\\\\
\noindent
MSC Class: 60C05, 68Q87

\section{Introduction}

Given a positive integer $n$, we denote by $\cS_n$ the set of permutations of the integers from 1 to $n$. Here, we see the permutations in $\cS_n$ as $n$-tuples where every integer in $[n]$ is met exactly once. The identity permutation in $\cS_n$ is denoted by $\Id_n$, and for every $k\in [n]$ and a permutation $\sigma\in \cS_n$, the position of $k$ in $\sigma$ is denoted $\sigma^{-1}(k)$. Moreover, the \emph{distance} between two positions $\ell,m\in [n]$ is $|\ell-m|$.

Pop-Stack Sorting is an algorithm that takes a permutation as an input and sorts its elements. It consists of several \emph{steps} which transform the input permutation into the identity as follows. A \emph{run} in a permutation is a maximal decreasing subsequence of consecutive entries. At every step, the algorithm reads the permutation it has to process and reverses each of its runs. For example, one step of the algorithm on the permutation $(5,2,4,6,3,1)$ produces $(2,5,4,1,3,6)$, the next step produces $(2,1,4,5,3,6)$ followed by $(1,2,4,3,5,6)$ and $\Id_6$ (which is the unique fixed point in $\cS_6$ of the algorithm). For a permutation $\pi$, we denote $\Pop(\pi)$ the result of one step of the algorithm on $\pi$. 

Pop-Stack Sorting was introduced by Avis and Newborn~\cite{AN81} as a variant of Knuth's stack-sorting machine. In practice, to execute the algorithm, one only needs a `handicapped' stack in which elements may be pushed one at a time but once they are popped, all of them must come out at once and in reversed order. Then, a step of the algorithm comes down to reading the given permutation from left to right and for every $i\in [n]$, when the $i$-th position is reached:
\begin{itemize}
    \item if $\sigma(i)$ is larger than the head of the stack, push $\sigma(i)$ in the stack,
    \item otherwise, pop all elements in the stack first, and then push $\sigma(i)$ in the stack.
\end{itemize}
The analysis of this and related permutation sorting algorithms attracted a lot of attention in the last few years, see~\cite{ABBHL19, ABH21, CG19, CGP21, Def21a, EG19, PS18}. Moreover, the observation that $\Pop$ may be defined in wider generality on meet-semilattices led to a series of papers analyzing Pop-Stack Sorting on Tamari lattices~\cite{Hon22}, $\nu$-Tamari lattices~\cite{Def22a}, type A crystal lattices~\cite{DW22} and other lattices~\cite{CS22}, as well as Coxeter groups~\cite{Def21b}. For a detailed account on stack-sorting algorithms, we invite the reader to consult Defant's PhD thesis~\cite{Def22PhD}.

In this short note, we will be interested in the number of steps of the algorithm on a uniform random permutation $\sigma_n\in \cS_n$ as $n\to \infty$. The following conjecture was raised by Defant (see~\cite{Def21a}, Conjecture~4.3 and~\cite{Def22PhD}, Conjecture~10.32) and reiterated at the Banff workshop on Analytic and Probabilistic Combinatorics in November 2022. 

\begin{conjecture}\label{conj 1}
A.a.s.\ Pop-Stack Sorting needs $(1-o(1))n$ steps to reach $\Id_n$ from $\sigma_n$.
\end{conjecture}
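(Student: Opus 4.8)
The plan is to convert the lower bound into a statement about how fast \emph{inversions} of $\sigma_n$ are removed, and then to exhibit a single value that is driven home only after $(1-o(1))n$ steps. The structural engine is the observation that one application of $\Pop$ never creates an inversion. Indeed, if two entries lie in different runs then reversing the runs preserves both their left-to-right order and their values, so their inverted-ness is unchanged; and if they lie in a common run they form a descending (hence inverted) pair that the reversal turns into an increasing (non-inverted) one. Since a non-inverted pair can never lie in a common run, it is never reversed again, so the inversion set $\Inv(\cdot)$ is monotonically non-increasing along $\sigma_n, \Pop(\sigma_n), \Pop^2(\sigma_n),\dots$, and a pair $\{x,y\}$ is removed exactly at the first step at which $x$ and $y$ occupy a common run. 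As $\Id_n$ is the only inversion-free permutation, the number of steps equals
\[
  T(\sigma_n)=\max_{y\in[n]}\tau_y,
\]
where $\tau_y$ is the first step after which $y$ permanently sits at position $y$. Each $\tau_y$ is at most $T(\sigma_n)$, so it suffices to produce, a.a.s., a single value $y$ with $\tau_y\ge(1-o(1))n$.

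For the candidate I would anchor on the right end. A one-line occupancy estimate shows that for fixed small $\delta>0$ one has, a.a.s., a value $y\le\delta n$ whose initial position is at least $(1-\delta)n$: the $\delta n$ entries in the rightmost positions form a uniform random subset, and the probability that none of them is $\le\delta n$ is at most $(1-\delta)^{\delta n}=o(1)$. Such a $y$ has to travel a net leftward distance of at least $(1-2\delta)n$ before it can reach its home position $y\le\delta n$, so $\tau_y$ is bounded below by that distance divided by the largest leftward jump $y$ ever makes in a single step; and a small value indeed drifts leftwards, being the bottom of its run (hence moving left on reversal) at every step at which no even smaller value lies immediately to its right.

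The hard part will be controlling this jump size, and it is precisely where tracking a single value stalls at a constant $c<1$. In one step $y$ moves left by one plus the length of the maximal increasing run immediately to its left; in a fresh uniform region the expected value of this extra length is a positive constant (of order $e-2$), so the single-value crawl only yields $\tau_y\ge cn$ with $c\approx 0.58$, not $(1-o(1))n$. To reach the conjectured constant I would instead exploit that $T(\sigma_n)$ is a \emph{maximum} over resolution times, and build a \emph{relay chain} of inversions $\{x_1,y_1\},\{x_2,y_2\},\dots$ in which each pair can become co-run only after its predecessor does, so that the resolutions are forced to occur one per step and the length of the chain lower-bounds $T(\sigma_n)$. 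Concretely I would grow the chain from the far-right small value, at each stage appending the inversion whose clearing is blocked exactly by the entry just moved, so that the chain follows not one value but a cascade that consumes one step per link without being shortcut by simultaneous resolutions elsewhere.

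The crux, and the main obstacle, is to prove that a.a.s.\ such a chain has length $(1-o(1))n$: that the blocking dependencies concatenate almost perfectly, leaving only $o(n)$ links at which two resolutions could happen in parallel. This demands control of the law of the permutation in a moving window around the active entries after it has ceased to be uniform---exactly the regime in which the crude displacement bound surrenders its constant factor. I expect the technical heart to be a second-moment or coupling analysis of this windowed dynamics, showing that the fresh randomness entering the window keeps the chain advancing by essentially one position per step and that backward or parallel moves contribute only a lower-order correction.
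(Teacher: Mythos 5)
You should first be aware that the statement you were given is Conjecture~\ref{conj 1}, which this paper does \emph{not} prove: the paper explicitly states that resolving it ``remains currently out of reach'' and establishes only the weaker lower bound of Theorem~\ref{thm 1} ($0.503\,n$ steps a.a.s.), so there is no paper proof to compare yours against. Judged on its own terms, your proposal is not a proof either, and you say so yourself: the crux --- that a.a.s.\ there exists a relay chain of blocking inversions of length $(1-o(1))n$ whose resolutions are forced into distinct steps --- is left entirely open, and it is essentially a restatement of the conjecture rather than a reduction of it. Announcing that ``the technical heart'' will be a second-moment or coupling analysis of a moving window names the difficulty without touching it; the hard regime is exactly the one where the permutation has ceased to be uniform, and neither your sketch nor the paper has tools there (the paper's own concluding remarks say its understanding of the early/middle dynamics does not go beyond the purely combinatorial Lemma~\ref{lem:L_k}). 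Note also a structural hole in the relay idea even as a scheme: at every step many inversions resolve simultaneously (a run of length $\ell$ resolves $\binom{\ell}{2}$ of them at once), so ``each link becomes co-run only after its predecessor'' does not by itself force one resolution per step --- consecutive links could clear in the same step by sharing a run --- and ruling this out for all but $o(n)$ links is an unproved and strong structural claim.

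Your preliminary observations are mostly sound but your quantitative heuristics are not. The inversion-monotonicity argument is correct and standard: pairs in distinct runs keep their relative order, pairs in a common run are inverted and become sorted, so $\Inv(\Pop(\sigma))\subseteq\Inv(\sigma)$ and a pair resolves at the first step its members share a run; the occupancy estimate for a value $y\le \delta n$ starting in position $\ge(1-\delta)n$ is the mirror image of Observation~\ref{ob:1/2}. However, your description of the single-step motion is wrong: a value at the bottom of a \emph{decreasing} run of length $\ell$ moves left by $\ell-1$ (not ``one plus the length of the maximal increasing run immediately to its left''), and by Lemma~\ref{lem:1/2+} every run has length at most $3$ in any image of $\Pop$, so after the first step (where Observation~\ref{ob:1/2+} caps displacements at $2\log_2 n$) leftward jumps are at most $2$ per step. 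That yields exactly the easy $(1/2-o(1))n$ bound and nothing more; your claimed $c\approx 0.58$ from an $e-2$ drift computation presumes the permutation stays locally uniform along the trajectory, which is false after one step, and is unsupported --- indeed the paper's entire machinery (the exclusion-process comparison in Lemma~\ref{lem:L_k}, the red/white coloring, Lemmas~\ref{lem:reds} and~\ref{lem:queue} resting on the second-moment Lemma~\ref{lem:Y}) is needed to extract just $0.003\,n$ beyond $1/2$. So the gap in your proposal is genuine and is located precisely where the conjecture is hard; what precedes it reproves, in mirrored form, only what the paper's first observations already give.
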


On the one hand, Ungar~\cite{Ung82} showed that for every $n\in \mathbb N$ and every permutation $\pi\in \cS_n$, Pop-Stack Sorting terminates on $\pi$ after at most $n-1$ steps. Thus, if Conjecture~\ref{conj 1} is true, one has to come up with an asymptotically matching lower bound. On the other hand, it is easy to show that a.a.s.\ the algorithm needs at least $(1/2-o(1))n$ steps to reach $\Id_n$ from $\sigma_n$, which was already known by Defant~\cite{Def21a, Def22PhD} and follows directly by combining Observations~\ref{ob:1/2},~\ref{ob:1/2+} and Lemma~\ref{lem:1/2+}.

During his talk at Banff, Defant asked if the following weaker version of Conjecture~\ref{conj 1} holds, see also Section~4.4 in~\cite{Def21a} and Section~10.3.4 in~\cite{Def22PhD}.

\begin{question}\label{pb 1}
Is it true that there is $c > 0.5$ such that a.a.s.\ Pop-Stack Sorting needs at least $c n$ steps to reach the identity permutation from $\sigma_n$?
\end{question}

Our goal in this note is to answer Question~\ref{pb 1} in the affirmative. This requires a careful analysis of the algorithm that is non-trivial for several reasons. First of all, for every $\pi\in \cS_n$, define $\Inv(\pi)$ as the set of pairs $i,j\in [n], i<j$ such that $\pi^{-1}(j) < \pi^{-1}(i)$. Then, a natural partial order one may define on $\cS_n$ is $\sigma \le \pi$ if $\Inv(\sigma) \subseteq \Inv(\pi)$ (also known as the \emph{left weak order}). However, unlike many classical sorting algorithms (as, for example, Bubble Sorting), $\Pop$ is not monotone with respect to $\le$ in the sense that $\sigma \le \pi$ does not imply that $\Pop(\sigma)\le \Pop(\pi)$. For example, while in $(n, n-1, \ldots, 2, 1)$ (which is the unique maximal element with respect to $\le$ in $\cS_n$) all pairs of elements are inverted, it takes only one step to transform this permutation to $\Id_n$ (which is the unique minimal element with respect to $\le$ in $\cS_n$). Another point is that the classical inductive scheme that deletes the maximal or the minimal element in a permutation (which is helpful in a wide variety of combinatorial problems) is not suitable to analyze the number of steps of the algorithm. For example, consider the permutation $\pi = (4,1,3,5,2)$. While $\Pop^2(\pi) = \Id_5$, the permutation $(4,1,3,2)$ needs three steps to reach $\Id_4$. The key point here is that while the largest element (in this case 5) blocks certain exchanges of elements in $\pi$ (the one between 3 and 2), this may lead to longer runs later on in the process (here $4, 3, 2$). We believe (but do not know how to prove) that in pathological examples as the one above, deleting the maximal element can only accelerate the process by a bounded number of steps irrespectively of the length of the permutation.

The following theorem is our main result.

\begin{theorem}\label{thm 1}
A.a.s.\ Pop-Stack Sorting needs at least $0.503 n$ steps to reach the identity permutation from $\sigma_n$.
\end{theorem}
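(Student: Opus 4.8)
The plan is to control the number of steps through the exact way the number of inversions changes under one step. One step reverses every run, and a run of length $\ell$ is a strictly decreasing block carrying $\binom{\ell}{2}$ inversions, all of which are destroyed by the reversal; since reversing the runs leaves the relative order of entries lying in distinct runs unchanged, no inversion between different runs is created or destroyed. Hence, writing $\ell_1^{(t)},\ell_2^{(t)},\dots$ for the lengths of the runs of $\Pop^t(\sigma_n)$,
\[
|\Inv(\Pop^t(\sigma_n))|-|\Inv(\Pop^{t+1}(\sigma_n))|=\sum_{j}\binom{\ell_j^{(t)}}{2}=:r_t .
\]
Summing over $t$ until the process reaches $\Id_n$ (which carries no inversion) shows that the running time $T$ satisfies $|\Inv(\sigma_n)|=\sum_{t=0}^{T-1}r_t$. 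Since a standard computation gives $|\Inv(\sigma_n)|=(1/4-o(1))n^2$ a.a.s., bounding $T$ from below is exactly the problem of bounding the \emph{removal rate} $r_t$ from above for long enough: if a.a.s.\ $r_t\le \bar r(t)$ for all $t<T$, then $T\ge \min\{s:\sum_{t<s}\bar r(t)\ge (1/4-o(1))n^2\}$.

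First I would observe that no \emph{uniform} bound of the form $r_t\le \tfrac12 n$ can hold: for the initial permutation one computes $\esp[r_0]=\sum_{i<k}\Prob(\sigma_n(i)>\dots>\sigma_n(k))=\sum_{d\ge1}(n-d)/(d+1)!=(e-2+o(1))n$, and $r_0$ concentrates around this value, so $r_0=(e-2+o(1))n\approx 0.718\,n>\tfrac12 n$ a.a.s. The mechanism behind a linear lower bound is therefore not a worst-case cap on $r_t$ but the \emph{decay} of $r_t$: the removal rate is large only while there are many short decreasing runs, and as these are consumed the rate must fall, so that a linear number of steps is needed to exhaust $\sim n^2/4$ inversions. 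Concretely, the plan is to track the empirical run-length profile $p^{(t)}_\ell := \tfrac1n\#\{j:\ell_j^{(t)}=\ell\}$ as $t=\lfloor cn\rfloor$ ranges over $[0,n)$, to show that it is a.a.s.\ well approximated by a deterministic profile $p_\ell(c)$ obtained from the induced evolution, and to integrate the resulting rate $\tfrac12\sum_\ell \ell(\ell-1)p_\ell(c)$. The $(1/2-o(1))n$ bound then corresponds to a first-order (crude) estimate of this profile — essentially the content packaged in Observations~\ref{ob:1/2},~\ref{ob:1/2+} and Lemma~\ref{lem:1/2+} — and the improvement to $0.503\,n$ to a quantitatively sharper upper bound on the profile over the initial window $c\in[0,1/2]$.

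The point that makes the improvement feasible is that one does not need sharp control over the whole trajectory, only a sufficiently tight a.a.s.\ upper bound $\bar r(t)$ valid up to time $\approx \tfrac12 n$. Indeed, because $\int_0^{1/2}(e-2)\,dc=0.359>1/4$, a constant bound recovers only $T\gtrsim 0.348\,n$, whereas any a.a.s.\ upper bound on $r_t$ that decays fast enough to satisfy $\sum_{t<0.503\,n}\bar r(t)<(1/4-\Omega(1))n^2$ already yields the theorem: in that case a positive density of inversions survives past step $0.503\,n$, so the algorithm cannot have terminated. I would obtain such a $\bar r(t)$ either analytically, from the leading behaviour of $p_\ell(c)$ near $c=0$, or combinatorially, by exhibiting at a positive density of steps local run configurations (e.g.\ isolated descents, or short runs whose reversal creates an ascent at the boundary) that provably keep $r_t$ below the generic value, and by aggregating these deficits through a first-moment estimate together with a bounded-differences concentration inequality for $r_t$, which is a sum of local run indicators.

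The hard part will be the propagation of genericity: justifying that $\Pop^t(\sigma_n)$ remains ``random enough'' for the local run statistics to behave as predicted for a linear number of steps. The two structural warnings from the introduction are exactly the obstructions here — $\Pop$ is not monotone with respect to the weak order, and deleting the largest entry can change the running time — so neither a monotonicity/coupling argument nor the classical induction that removes an extreme element is available to transport control from one step to the next. My plan to circumvent this is to avoid tracking the full law and instead track only the few functionals that enter the rate, namely $r_t$ and the low-order run counts, proving for each a self-contained bounded-differences concentration estimate conditional on $\Pop^{t}(\sigma_n)$ together with a one-step estimate of its conditional drift. The delicate step is to show that these drift estimates compose over $\Theta(n)$ steps without the error accumulating, which is where the bulk of the technical work — and any genuine obstacle to pushing the constant well beyond $0.503$ — will lie.
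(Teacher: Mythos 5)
Your reduction is sound as far as it goes: inversions between entries in distinct runs are preserved by one step, each run of length $\ell$ loses exactly $\binom{\ell}{2}$ inversions, so indeed $|\Inv(\sigma_n)|=\sum_{t<T}r_t$ and $|\Inv(\sigma_n)|=(1/4-o(1))n^2$ a.a.s. The genuine gap is that the one estimate your argument actually needs --- an a.a.s.\ upper bound $\bar r(t)$ with $\sum_{t<0.503n}\bar r(t)<(1/4-\Omega(1))n^2$ --- is never proved; it is only announced as something to be extracted from a deterministic run-profile limit, or from local configurations plus a drift-and-concentration scheme, and you yourself defer the composition of the drift estimates over $\Theta(n)$ steps as ``the bulk of the technical work''. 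That deferred step is the entire difficulty of the problem, and the obstructions listed in the introduction apply directly to your scheme: $\Pop^t(\sigma_n)$ is a deterministic function of $\sigma_n$, so a ``conditional drift'' estimate amounts to controlling the conditional law of $\Pop^t(\sigma_n)$, for which no tractable sufficient statistic is known ($\Pop$ is not monotone for the weak order and admits no delete-the-extreme induction); likewise, a bounded-differences inequality in the initial randomness is unjustified, since a local modification of $\sigma_n$ can propagate across the permutation over $t$ steps. Note also what is available deterministically: runs in the image of $\Pop$ have length at most $3$ (Lemma~\ref{lem:1/2+}), which only caps $r_t$ by $n$ for $t\ge 1$ (a length-$3$ run removes $3$ inversions per $3$ entries) and hence gives only $T\ge(1/4-o(1))n$. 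So your framework, as stated, does not even recover the easy $(1/2-o(1))n$ bound; your aside that this bound is ``essentially the content'' of Observations~\ref{ob:1/2},~\ref{ob:1/2+} and Lemma~\ref{lem:1/2+} inside the profile framework mischaracterizes how those results combine --- they give a displacement argument (a.a.s.\ some element starts at distance $n-o(n)$ from its final position and moves at most two positions per step after the first one), not an inversion-rate argument.

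For contrast, the paper never touches global inversion or run statistics: it tracks one large element $k$ that a.a.s.\ starts among the first $N=n^{2/3}$ positions (Observation~\ref{ob:1/2}), so that $k$ must travel $n-o(n)$ positions at speed at most $2$; the gain beyond $1/2$ comes from showing (Lemmas~\ref{lem:L_k},~\ref{lem:reds} and~\ref{lem:queue}, the last resting on the second-moment Lemma~\ref{lem:Y}) that a.a.s.\ from step $cn$ onwards the last $\eps_0 n$ positions consist of right-to-left minima together with isolated ``red'' elements that advance exactly one position per step, so $k$ covers its final stretch at speed $1$, forcing $T\ge(1/2+\eps_0/2-o(1))n$ with $\eps_0=0.0061$. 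To salvage your plan you would need a proved, quantitative upper bound on the density of descents and of length-$3$ runs of $\Pop^t(\sigma_n)$ holding uniformly for $t\le 0.503n$; nothing of this sort is currently known, and obtaining it is essentially the same open problem your proposal set out to solve.
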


The main idea in the proof of Theorem~\ref{thm 1} is roughly as follows. First of all, setting $N = n^{2/3}$, we show that there is an integer $k\in [n-N+1, n]$ in some of the first $N$ positions in $\sigma_n$. We use this to show that for every $c, \eps > 0$ such that $2c+\eps < 1$,  a.a.s.\ the position of $k$ in $\Pop^{cn}(\sigma_n)$ is still at most $(1-\eps)n$. At the same time, we show that for a suitable choice of $c$ and $\eps$, a.a.s.\ for every $t\ge cn$, the last $\eps n$ integers in $\Pop^t(\sigma_n)$ may be partitioned into an increasing subsequence with terms that are not necessarily consecutive, and a set of numbers with positions at distance at least 2 from each other that move by one position to the right at step $t+1$. Finally, we use the above structure to show that once $k$ reaches the last $(1-\eps)n$ positions, it starts jumping by exactly one position to the right at every step, and thus conclude that $k$ needs at least $(1/2+\eps/2-o(1))n$ steps to reach its position in $\Id_n$. 

\paragraph{Notation and terminology.} In this paper, we skip lower and upper integer parts if rounding does not influence the argument for better readability. 

Given a sequence of probability spaces $(\Omega_n, \cF_n, \mathbb P_n)$ and events $E_n\in \cF_n$ for all $n\ge 1$, we say that $(E_n)_{n\ge 1}$ holds \emph{asymptotically almost surely}, or \emph{a.a.s.}\ for short, if $\mathbb P_n(E_n)\to 1$ as $n\to \infty$. However, we often abuse notation and say that $E_n$ itself holds a.a.s.

For integers $k,\ell\in [n]$, we say that $k$ \emph{jumps over} $\ell$ at step $t$ if the order of the positions of $k$ and $\ell$ changes at the $t$-th step of the algorithm. Finally, we say that an event happens \emph{at time $t$} if it happens between steps $t-1$ and $t$.

\section{Preliminaries on random permutations}

In this section, we gather some results on random permutations used in the proof of Theorem~\ref{thm 1}. 

\begin{observation}\label{ob:1/2}
Define $N = n^{2/3}$. Then, a.a.s.\ there is $k\in [n-N+1,n]$ such that $|\sigma_n^{-1}(k) - k|\ge n-2N$.
\end{observation}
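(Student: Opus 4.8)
The plan is to reduce the statement to the more transparent event that some large value occupies one of the early positions. Concretely, I will show that a.a.s.\ there exists $k\in[n-N+1,n]$ with $\sigma_n^{-1}(k)\le N$, and then observe that this immediately yields the claim: for such a $k$ we have
\[
|\sigma_n^{-1}(k)-k| = k-\sigma_n^{-1}(k) \ge (n-N+1)-N = n-2N+1 \ge n-2N,
\]
since $k\ge n-N+1$ and $\sigma_n^{-1}(k)\le N$. Thus it suffices to prove that a.a.s.\ at least one of the $N$ largest values $\{n-N+1,\dots,n\}$ sits among the first $N$ positions.

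First I would pass to the complementary event $A$ that \emph{none} of the top $N$ values lies in the first $N$ positions; equivalently, the entries in positions $1,\dots,N$ all belong to the set $\{1,\dots,n-N\}$ of the $n-N$ smallest values. Since the (unordered) set of values occupying positions $1,\dots,N$ is a uniformly random $N$-subset of $[n]$, the number of top values among them is hypergeometric, and
\[
\Prob(A) = \frac{\binom{n-N}{N}}{\binom{n}{N}} = \prod_{i=0}^{N-1}\frac{n-N-i}{n-i}.
\]

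To finish, I would bound this product by taking logarithms and using $\log(1-x)\le -x$: writing each factor as $1-\tfrac{N}{n-i}$ and using $n-i\le n$ for $0\le i\le N-1$ gives
\[
\log \Prob(A) = \sum_{i=0}^{N-1}\log\!\Big(1-\frac{N}{n-i}\Big) \le -\sum_{i=0}^{N-1}\frac{N}{n-i} \le -\frac{N^2}{n}.
\]
With $N=n^{2/3}$ we have $N^2/n = n^{1/3}\to\infty$, so $\Prob(A)\le \exp(-n^{1/3})\to 0$, and the desired event holds a.a.s. This argument is essentially a direct first-principles probability estimate, so I do not expect a genuine obstacle; the only points requiring mild care are the reduction in the first paragraph (checking that a front placement of a top value really produces displacement at least $n-2N$) and confirming that the crude bound $n-i\le n$ suffices, which it does precisely because $N^2/n\to\infty$ for the chosen $N$.
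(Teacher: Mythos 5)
Your proposal is correct and follows essentially the same route as the paper: both pass to the complementary event that no value from $[n-N+1,n]$ occupies the first $N$ positions, express its probability as the product $\prod_{i=0}^{N-1}\bigl(1-\tfrac{N}{n-i}\bigr)$, and bound it by roughly $e^{-N^2/n}=e^{-n^{1/3}}=o(1)$. Your write-up is merely more explicit about the reduction step and uses logarithms instead of the paper's direct bound $(1-\tfrac{N}{n})^N$, but these are cosmetic differences.
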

\begin{proof}
This follows from the fact that the probability that none of the first $N$ positions in $\sigma_n$ is occupied by a number in the interval $[n-N+1, n]$ is $\prod_{i=0}^{N-1} (1 - \tfrac{N}{n-i})\le (1-\tfrac{N}{n})^N = o(1)$.
\end{proof}

\begin{observation}\label{ob:1/2+}
A.a.s.\ for every $k\in [n]$, the positions of $k$ in $\sigma_n$ and in $\Pop(\sigma_n)$ are at distance at most $2\log_2 n$ from each other.
\end{observation}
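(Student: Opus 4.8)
The plan is to reduce the statement to a bound on the length of the longest run of $\sigma_n$. The first step is the deterministic observation that a single step of the algorithm never moves an element out of its own run: the runs partition $[n]$ into intervals of consecutive positions, and each such interval is simply reversed. Thus an element sitting at position $p$ inside a run that spans the positions $a, a+1, \dots, b$ is sent to position $a+b-p$, so its displacement equals $|2p-a-b| \le b-a$, one less than the length of that run. In particular, if every run of $\sigma_n$ has length at most $2\log_2 n + 1$, then no element moves by more than $2\log_2 n$ positions, and the observation holds.

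It then remains to show that a.a.s.\ $\sigma_n$ has no run of length at least $L := 2\log_2 n + 2$, which I would establish by a first-moment/union-bound argument over the possible starting positions of a long run. Any run of length at least $L$ contains a window of $L$ consecutive positions whose values decrease, and for a fixed such window the $L$ entries placed there appear in each of the $L!$ relative orders with equal probability; hence that window is decreasing with probability exactly $1/L!$.

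Summing over the fewer than $n$ windows and using the crude bound $L! \ge 2^{L-1}$, I obtain
\[
\Prob\bigl[\sigma_n \text{ has a run of length at least } L\bigr] \le \frac{n}{L!} \le \frac{n}{2^{L-1}} = \frac{n}{2\,n^2} = \frac{1}{2n} = o(1),
\]
since $2^{L-1} = 2^{2\log_2 n + 1} = 2n^2$. Combined with the displacement bound from the first paragraph, this proves the observation. I do not expect a genuine obstacle here; the only points that require care are the precise link between an element's displacement and the length of the run containing it, and the verification that the constant $2$ in $2\log_2 n$ makes the factorial large enough to absorb the union-bound factor $n$ — which it does comfortably, with the probability in fact decaying like $1/n$.
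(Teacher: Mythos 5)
Your proposal is correct and follows essentially the same route as the paper: a first-moment/union bound showing that a.a.s.\ $\sigma_n$ has no run of length about $2\log_2 n$ (using $L!\ge 2^{L-1}$), combined with the fact that one step of $\Pop$ only reverses runs, so each element's displacement is less than the length of its run. Your write-up merely makes explicit the displacement step that the paper leaves implicit.
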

\begin{proof}
Set $s = 2\log_2 n$. Then, an elementary first moment computation shows that the expected number of runs of length $s$ in $\sigma_n$ is $(n-s+1)/s!\le n/2^{s-1} = o(1)$. Then, by Markov's inequality a.a.s.\ there is no such run in $\sigma_n$, which proves the observation.
\end{proof}

We continue with a couple of concentration lemmas. The proofs consist of standard second moment arguments.

\begin{lemma}\label{lem:conc}
Fix $b\in (0,1)$ and denote by $X$ the number of integers in the interval $[bn+1, n]$ whose positions in $\sigma_n$ are also in the interval $[bn+1, n]$. Then, $\mathbb E X = (1-b)^2 n$ and  for every $\delta > 0$,
\[\Prob(|X-\mathbb E X|\ge \delta n) \xrightarrow[n\to \infty]{} 0.\]
\end{lemma}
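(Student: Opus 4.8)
The plan is to establish the mean by linearity of expectation and the concentration by a second-moment (Chebyshev) argument, exactly the \textbf{standard second moment arguments} advertised before the statement. First I would write $X = \sum_{k=bn+1}^{n} I_k$, where $I_k$ is the indicator of the event that the value $k$ occupies a position in the interval $[bn+1,n]$ in $\sigma_n$. Since the interval $[bn+1,n]$ contains $(1-b)n$ positions and $\sigma_n$ is uniform, each value is equally likely to occupy any position, so $\mathbb E[I_k] = (1-b)n/n = 1-b$. Summing over the $(1-b)n$ values $k \in [bn+1,n]$ gives $\mathbb E X = (1-b)n \cdot (1-b) = (1-b)^2 n$ by linearity, as claimed.

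For the concentration I would bound the variance. The diagonal terms contribute $\sum_k \mathrm{Var}(I_k) = (1-b)n \cdot \big((1-b) - (1-b)^2\big) = b(1-b)^2 n = O(n)$. For the off-diagonal terms, writing $m = (1-b)n$, I would compute the joint probability that two distinct values $k,j$ both land in the $m$ target positions, which is $\tfrac{m(m-1)}{n(n-1)}$ (this is sampling two distinct values into the target positions without replacement). Subtracting $\mathbb E[I_k]\,\mathbb E[I_j] = (m/n)^2$ and simplifying gives $\mathrm{Cov}(I_k,I_j) = \tfrac{m(m-n)}{n^2(n-1)} = -\tfrac{b(1-b)}{n-1}$, which is \emph{negative} — the expected sign for sampling without replacement.

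The key observation is that, since all pairwise covariances are negative, they only decrease the variance; hence $\mathrm{Var}(X) \le \sum_k \mathrm{Var}(I_k) = b(1-b)^2 n = O(n)$, and I never even need the exact value of the covariance, only its sign. Chebyshev's inequality then yields
\[
\Prob\big(|X - \mathbb E X| \ge \delta n\big) \le \frac{\mathrm{Var}(X)}{(\delta n)^2} \le \frac{b(1-b)^2}{\delta^2\, n} \xrightarrow[n\to\infty]{} 0,
\]
which completes the argument. The computations here are entirely routine and there is no genuine obstacle; the only place calling for a little care is the joint probability in the off-diagonal terms and confirming that the covariance is negative, but as noted the crude bound $\mathrm{Var}(X) \le \sum_k \mathrm{Var}(I_k)$ already suffices, so the precise covariance plays no essential role.
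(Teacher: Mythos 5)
Your proposal is correct and follows essentially the same route as the paper: decompose $X$ into indicators, compute the mean by linearity, and apply Chebyshev after a second-moment bound (indexing by values rather than positions is immaterial by symmetry). The only difference is in handling the off-diagonal terms: the paper bounds the sum of covariances by $o(n^2)$, while you observe that each covariance is negative and simply discard them, yielding the slightly stronger bound $\mathrm{Var}(X) = O(n)$; both suffice for the stated conclusion.
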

\begin{proof}
On the one hand, for every position $k\in [bn+1, n]$, there is probability $1-b$ that $\sigma(k)\ge bn+1$. Thus, $\mathbb E X = (1-b)^2 n$. On the other hand, 
\begin{align*}
& \mathrm{Var}(X) = \sum_{i=bn+1}^n \Prob(\sigma_n(i)\in [bn+1,n])  - \Prob(\sigma_n(i)\in [bn+1,n])^2\\ 
+\; 
& \sum_{i\neq j} \Prob(\{\sigma_n(i)\in [bn+1,n]\}\cap \{\sigma_n(j)\in [bn+1,n]\}) - \Prob(\sigma_n(i)\in [bn+1,n])\Prob(\sigma_n(j)\in [bn+1,n]),
\end{align*}
where the last sum goes over all pairs of distinct positions in $[bn+1, n]$. Thus, while the first sum is equal to $n((1-b)-(1-b)^2) = O(n)$, the second sum is at most $n^2\cdot \left(\tfrac{(1-b)n\cdot ((1-b)n-1)}{n(n-1)} - \left(\tfrac{(1-b)n}{n}\right)^2\right) = o(n^2)$. Finally, Chebyshev's inequality implies that for every $\delta > 0$,
\[\Prob(|X-\mathbb E X|\ge \delta n)\le \frac{\mathrm{Var}(X)}{\delta^2 n^2} = o(1),\]
as desired.
\end{proof}

\begin{lemma}\label{lem:Y}
Fix $\eps\in (0,1)$ and denote by $Y$ the number of positions $i\in [n]$ such that the number of integers larger than $\sigma_n(i)$ in a position in the interval $[i+1,n]$ is at most $\eps n$. Then, 
\[\mathbb E Y = (\eps \log(\eps^{-1})+\eps+o(1))n,\] 
and for every $\delta > 0$,
\[\Prob(|Y-\mathbb E Y|\ge \delta n) \xrightarrow[n\to \infty]{} 0.\]
\end{lemma}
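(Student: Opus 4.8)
The plan is to compute the expectation by linearity and establish concentration via Chebyshev's inequality, exactly as in Lemma~\ref{lem:conc}. For a fixed position $i\in [n]$, let $R_i$ denote the number of integers larger than $\sigma_n(i)$ whose positions lie in $[i+1,n]$; this is the number of ``non-inversions to the right'' of $i$. I would introduce the indicator $\mathbf 1_i = \mathbf 1\{R_i \le \eps n\}$ so that $Y = \sum_{i=1}^n \mathbf 1_i$ and $\mathbb E Y = \sum_{i=1}^n \Prob(R_i \le \eps n)$. The main computation is to evaluate $\Prob(R_i \le \eps n)$ as a function of $i$. Conditioning on the value $\sigma_n(i) = v$, the positions to the right of $i$ form a uniformly random $(n-i)$-subset of the remaining values, and among the values larger than $v$ (there are $n-v$ of them) each lands to the right of $i$ with probability roughly $(n-i)/(n-1)$. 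Writing $i = xn$ and $v = yn$, the quantity $R_i$ is a sum of weakly dependent indicators, concentrated around its mean $(n-v)(n-i)/(n-1) \approx (1-x)(1-y)n$. Thus $\mathbf 1_i \approx 1$ precisely when $(1-x)(1-y)\le \eps$, i.e.\ when $y \ge 1 - \tfrac{\eps}{1-x}$.

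To obtain the claimed constant, I would pass to the limit and integrate. After averaging over the uniformly random value $v$ (equivalently integrating $y$ over $[0,1]$) and then over the position $x\in[0,1]$, the probability $\Prob(R_i\le \eps n)$ converges to the length of the set of $y\in[0,1]$ for which $(1-x)(1-y)\le \eps$. For a fixed $x$ with $1-x\le \eps$ this holds for all $y\in[0,1]$, contributing measure $1$; for $1-x > \eps$ it holds iff $1-y \le \eps/(1-x)$, contributing measure $\eps/(1-x)$. Hence
\[
\frac{\mathbb E Y}{n} \;\longrightarrow\; \int_0^{1} \min\!\left(1,\ \frac{\eps}{1-x}\right)\,dx.
\]
Splitting the integral at $x = 1-\eps$ gives $\int_0^{1-\eps}\tfrac{\eps}{1-x}\,dx + \int_{1-\eps}^{1} 1\,dx = \eps\log(\eps^{-1}) + \eps$, which matches the stated value of $\mathbb E Y$. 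I would be careful to justify that the discrete sum $\tfrac1n\sum_i \Prob(R_i\le\eps n)$ converges to this integral, using a Riemann-sum argument together with the concentration of $R_i$ around its mean (so the ``boundary'' positions where $(1-x)(1-y)$ is within $o(1)$ of $\eps$ contribute negligibly).

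For the concentration statement, I would follow the second-moment template of Lemma~\ref{lem:conc} verbatim. Writing $\mathrm{Var}(Y) = \sum_i \mathrm{Var}(\mathbf 1_i) + \sum_{i\ne j}\mathrm{Cov}(\mathbf 1_i,\mathbf 1_j)$, the diagonal contributes $O(n)$ since each variance is at most $1$. The key point is that the indicators $\mathbf 1_i$ and $\mathbf 1_j$ are only weakly correlated: the events depend on the relative order of $\sigma_n$ restricted to large overlapping suffixes, and a pairwise covariance estimate gives $\mathrm{Cov}(\mathbf 1_i,\mathbf 1_j) = o(1)$ uniformly over $\{i\ne j\}$ (with the negligible exception of pairs with $|i-j|$ bounded, which number only $O(n)$ and contribute $O(1)$ covariance each). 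Summing yields $\mathrm{Var}(Y) = o(n^2)$, and Chebyshev's inequality gives $\Prob(|Y - \mathbb E Y|\ge \delta n) \le \mathrm{Var}(Y)/(\delta^2 n^2) = o(1)$, as required.

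I expect the main obstacle to be the covariance bound rather than the expectation. Establishing $\mathrm{Cov}(\mathbf 1_i,\mathbf 1_j)=o(1)$ requires showing that $R_i$ and $R_j$ are asymptotically independent even though they are built from overlapping sets of positions; the cleanest route is to condition on the pair of values $(\sigma_n(i),\sigma_n(j))$ and argue that, after this conditioning, $R_i$ and $R_j$ are each sums of many nearly-independent indicators whose joint fluctuations decorrelate in the limit, so that the joint probability $\Prob(\mathbf 1_i = 1,\ \mathbf 1_j = 1)$ factorizes up to $o(1)$. A convenient technical device is to realize $\sigma_n$ via $n$ i.i.d.\ uniform labels on $[0,1]$, turning $R_i/n$ into an empirical average that concentrates by a Chernoff-type bound; this reduces the decorrelation to the approximate independence of two such empirical averages over largely disjoint index sets.
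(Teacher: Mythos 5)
Your proposal is correct and takes essentially the same approach as the paper: the paper likewise realizes $\sigma_n$ via i.i.d.\ uniform labels, shows each indicator is asymptotically a threshold event for the label at position $i$ (via a Chernoff bound on the conditional binomial count), integrates to obtain $\eps\log(\eps^{-1})+\eps$, and proves concentration by Chebyshev with covariances that vanish because, up to $o(1)$, each event is determined by the independent label at its own position. The only cosmetic differences are that the paper splits off the last $\eps n$ positions (which trivially qualify) before summing, and works with the exact binomial law given $U_i=t$ rather than your hypergeometric conditioning on the value $\sigma_n(i)=v$.
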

\begin{proof}
Define $N = n^{2/3}$ and for every $i\in [n]$, define $\ell_i$ as the number of positions $k\in [i+1,n]$ such that $\sigma_n(i) < \sigma_n(k)$. Also, let $(U_i)_{i=1}^n$ be a sequence of $n$ independent random variables, each distributed uniformly in the interval $[0,1]$. We construct a map $\pi_n:[n]\to [n]$ by setting $\pi_n(i) = |\{j\in [n]: U_j \le U_i\}|$ for every $i\in [n]$. Then, $\pi_n$ a.s.\ belongs to $\cS_n$, and on this event, it is distributed as $\sigma_n$.

We show the lemma for $\pi_n$; for simplicity of notation, we keep using $Y$ and $(\ell_i)_{i=1}^n$. Define $Z$ as the number of positions $i\in [(1-\eps)n]$ such that $\ell_i\le \eps n$. Then,
\[\mathbb E Z = \sum_{i=1}^{(1-\eps)n} \Prob(\ell_i\le \eps n) = \sum_{i=1}^{(1-\eps)n} \int_{0}^1 \Prob(|\{j\in [i+1,n]: U_j > t\}|\le \eps n) \mathrm{d}t.\]
However, $|\{j\in [i+1,n]: U_j > t\}|$ is distributed as a Binomial random variable $\mathrm{Bin}(n-i, 1-t)$. Thus, an immediate application of Chernoff's inequality shows that if $(n-i)(1-t) - \eps n \ge N$ (or equivalently $t\le 1 - \tfrac{\eps n + N}{n-i}$), the above probability is $o(1)$, while if $(n-i)(1-t) - \eps n \le N$ (or equivalently $t\ge 1 - \tfrac{\eps n - N}{n-i}$), the same probability is $1-o(1)$. Since $N = o(n)$, we have that
\[\mathbb E Z = \sum_{i=1}^{(1-\eps)n} \left(1 - \left(1 - \frac{\eps n}{n-i} + o(1)\right)\right) = \left(\int_{\eps}^1 \frac{\eps}{u}\mathrm{d}u + o(1)\right)n = (- \eps \log\eps + o(1))n,\]
where the second equality comes from a classical integral approximation of the sum $\sum_{i=1}^{(1-\eps)n} \frac{n}{n-i}$. This shows the first point as $\eps \log(\eps^{-1}) = -\eps\log \eps$ and $\mathbb E Y = \eps n + \mathbb E Z$.

Moreover,
\begin{align*}
\mathrm{Var}(Z)\le \mathbb E Z + \sum_{i\neq j} \Prob(\{\ell_i\le \eps n\}\cap \{\ell_j \le \eps n\}) - \Prob(\ell_i \le \eps n)\Prob(\ell_j \le \eps n).
\end{align*}

However, by independence of $U_i$ and $U_j$ and a similar argument as the one above, we have that $ \Prob(\{\ell_i\le \eps n\}\cap \{\ell_j \le \eps n\})$ is bounded from below by
\[(1-o(1)) \Prob(U_i \ge 1-\tfrac{\eps n - N}{n-i}) \Prob(U_j \ge 1-\tfrac{\eps n - N}{n-j}) = (1-o(1)) \Prob(\ell_i \le \eps n)\Prob(\ell_j \le \eps n),\]
and bounded from above by
\[(1+o(1)) \Prob(U_i \ge 1-\tfrac{\eps n + N}{n-i}) \Prob(U_j \ge 1-\tfrac{\eps n + N}{n-j}) = (1+o(1)) \Prob(\ell_i \le \eps n)\Prob(\ell_j \le \eps n).\]
We conclude that
\begin{align*}
\mathrm{Var}(Z)\le \mathbb E Z + \sum_{i\neq j} o(\Prob(\ell_i \le \eps n)\Prob(\ell_j \le \eps n)) = o(n^2).
\end{align*}
Finally, Chebyshev's inequality implies that for every $\delta > 0$,
\[\Prob(|Y-\mathbb E Y|\ge \delta n)\le \frac{\mathrm{Var}(Y)}{\delta^2 n^2} = o(1),\]
as desired.
\end{proof}

\section{Proof of Theorem~\ref{thm 1}}

We begin with an easy observation about the structure of the permutations in the image of $\Pop$.

\begin{lemma}\label{lem:1/2+}
For every $\sigma\in \cS_n$, the longest run in $\Pop(\sigma)$ is of length at most $3$. 
\end{lemma}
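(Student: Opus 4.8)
The plan is to exploit the fact that $\Pop(\sigma)$ is obtained from $\sigma$ by reversing each run, so it is a concatenation of increasing blocks $B_1,\ldots,B_m$, where $B_i$ is the reversal of the $i$-th run $R_i$ of $\sigma$. In particular, the first entry of $B_i$ is the smallest element of $R_i$ and the last entry of $B_i$ is the largest element of $R_i$, and since each $B_i$ is increasing, every descent of $\Pop(\sigma)$ occurs at a boundary between two consecutive blocks. I would begin by recording these structural facts.

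Next I would analyze an arbitrary run $D$ of $\Pop(\sigma)$ of length at least $2$, that is, a maximal decreasing subsequence of consecutive entries, and track which blocks it meets. Since $D$ occupies an interval of positions and each block is increasing, the intersection of $D$ with the first block it meets can only be the last entry of that block (otherwise two increasing consecutive entries would lie in $D$), which equals $\max R_i$; symmetrically, the intersection with the last block it meets is the first entry, equal to $\min R_j$. Every intermediate block is contained in $D$ in its entirety and, being increasing, must therefore be a singleton, for otherwise $D$ would contain an ascent. Thus if $D$ spans the blocks $B_i, B_{i+1},\ldots,B_j$, then $D$ has length exactly $j-i+1$ and all of $B_{i+1},\ldots,B_{j-1}$ are singletons.

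The key step, and the one that forces the bound, is to rule out two consecutive intermediate singleton blocks, i.e.\ to show $j-i\le 2$. Suppose not; then $B_{i+1}$ and $B_{i+2}$ are both singletons, so the corresponding runs $R_{i+1}=(u)$ and $R_{i+2}=(v)$ of $\sigma$ have length $1$. Reversal fixes singletons and leaves the position interval of every block unchanged, so $u$ and $v$ occupy the same adjacent positions in $\sigma$ and in $\Pop(\sigma)$, with $u$ immediately to the left of $v$. Because $R_{i+1}$ and $R_{i+2}$ are distinct maximal runs of $\sigma$, their common boundary is an ascent, i.e.\ $u<v$; hence $u<v$ is also an ascent of $\Pop(\sigma)$, contradicting that $u,v$ are consecutive entries of the decreasing run $D$. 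Therefore $D$ spans at most three blocks and has length at most $3$.

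The main obstacle is essentially bookkeeping: making precise that a run of $\Pop(\sigma)$ interacts with the increasing blocks only through their extreme entries, and that any fully contained intermediate block must be a singleton. Once this is set up, the contradiction between ``$u,v$ adjacent and decreasing in $\Pop(\sigma)$'' and ``$u,v$ coming from consecutive runs of $\sigma$, hence increasing'' closes the argument. The bound is tight: for $\sigma=(5,2,3,4,1)$ one computes $\Pop(\sigma)=(2,5,3,1,4)$, whose run $(5,3,1)$ has length $3$.
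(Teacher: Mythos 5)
Your proof is correct and rests on the same two facts as the paper's own argument: every interior entry of a run of $\Pop(\sigma)$ must come from a singleton run of $\sigma$ (since any entry belonging to a reversed run lies in an increasing block of length at least $2$), and two adjacent singleton runs of $\sigma$ meet at an ascent that survives into $\Pop(\sigma)$, contradicting a descent there. Your block-decomposition bookkeeping is a more explicit (and, in its handling of runs as consecutive \emph{positions} rather than consecutive values, more careful) rendering of the paper's terser proof by contradiction, but the underlying idea is the same.
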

\begin{proof}
Suppose for contradiction that there is $\sigma\in \cS_n$ such that for $\pi = \Pop(\sigma)$, there is $k\in [n-3]$ such that $\pi^{-1}(k) > \pi^{-1}(k+1) > \pi^{-1}(k+2) > \pi^{-1}(k+3)$. Then, for both $i\in \{k+1,k+2\}$, $\sigma^{-1}(i) = \pi^{-1}(i)$; indeed, every position that gets updated in $\pi$ is part of an increasing subsequence of at least two consecutive elements. However, this means that $\sigma^{-1}(k+1) > \sigma^{-1}(k+2)$, and therefore $\pi^{-1}(k+1)$ and $\pi^{-1}(k+2)$ must form an increasing subsequence. This contradiction shows the lemma.
\end{proof}

In the sequel, for every $t\ge 1$, we denote $\sigma_n^t = \Pop^t(\sigma_n)$. Also, we define $L_t$ as the set of positive integers $s$ such that every integer on the right of them in $\sigma_n^t$ is larger than $s$ (which are often called \emph{right-to-left minima} of $\sigma_n^t$).

\begin{lemma}\label{lem:L_k}
For every $b\in (0,1)$ and every integer $s\in [(1-b/4)n+1, n]$ such that $\sigma_n^{-1}(s)\ge (1-b/4) n+1$, $s$ belongs to $L_{bn}$.
\end{lemma}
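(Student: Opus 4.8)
The plan is to show that all values smaller than $s$ reach the left of $s$ within $bn$ steps, and then to observe that this configuration, once attained, persists. I would organise the argument around the position $p_t := (\sigma_n^t)^{-1}(s)$ of $s$, the set $R_t$ of values $v<s$ lying to the right of $s$ in $\sigma_n^t$, and the count $g_t := |R_t|$; note that $s\in L_t$ is exactly the assertion $g_t=0$.

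The first ingredient is a persistence lemma: $L_t\subseteq L_{t+1}$ for every $t$. This is a short local computation. If $s\in L_t$, then every entry to the right of $s$ exceeds $s$, so $s$ is the minimum, hence the last entry, of its run; reversing that run moves $s$ to the front while sending the other (larger) members of the run to its right, and reversing the runs lying entirely to the right of $s$ only permutes entries that were already larger than $s$ and stay to its right. Hence $s\in L_{t+1}$. Consequently it suffices to exhibit a single time $t_0\le bn$ with $g_{t_0}=0$. I would then record two one-way facts. A value $v<s$ can never move from the left of $s$ to its right, since that would require $v$ and $s$ to share a decreasing run with $v$ before $s$, forcing $v>s$; dually, $s$ may jump to the left over a value $w>s$ but never back. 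As there are $n-s<(b/4)n$ values exceeding $s$, the total leftward displacement of $s$ over the whole execution is at most $n-s$, so $p_t\ge p_0-(b/4)n>(1-b/2)n$ for all $t$; in particular $g_t$ is non-increasing and $g_0\le n-p_0\le (b/4)n$.

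It remains to bound the clearing time, and here Lemma~\ref{lem:1/2+} is essential: for $t\ge 1$ every run of $\sigma_n^t$ has length at most $3$. I would track the \emph{frontier}, the leftmost value $v<s$ still to the right of $s$. For $t\ge 1$ the frontier is immediately preceded by $s$ or by a value larger than $s$ (all entries strictly between $s$ and the frontier exceed $s$), so it is the minimum of a run of length $\ge 2$, and therefore at each step it either crosses $s$ or advances at least one position to the left, by at most two since runs have length $\le 3$; crucially it never stalls. Combining this with the drift bound, the frontier reaches $s$ after a bounded transient, and the remaining smaller values advance behind it as a left-moving convoy and cross $s$ one after another. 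Accounting then gives that all values $v<s$ cross $s$ after at most $(\text{largest initial distance of such a value from } s)+g_0\le (b/2)n+(b/4)n<bn$ steps, yielding $g_{t_0}=0$, after which the persistence lemma finishes the proof.

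The step I expect to be the main obstacle is this last one: showing that the clearing genuinely completes in $O(n)$, indeed $\le bn$, steps rather than quadratically many. The difficulty is that no single local potential decreases monotonically; when the frontier crosses $s$, the next smaller value may sit several positions further back, so the natural ``gap plus $g_t$'' potential can stay flat or even increase at a crossing. The role of the length-$\le 3$ bound is precisely to prevent large blocks of values $>s$ from separating consecutive remaining small values and to force them to travel left together at amortised rate at least one per step. Quantifying this convoy behaviour rigorously—and verifying that the transient together with the $g_0\le (b/4)n$ crossings fits inside the $bn$ budget, which is exactly the slack built into the hypotheses $s,\sigma_n^{-1}(s)\ge (1-b/4)n+1$—is the crux of the argument.
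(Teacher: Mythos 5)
Your preliminary steps are correct and do part of the work: the one\nobreakdash-way facts (a value $v<s$ never crosses $s$ from left to right; $s$ jumps left over each $w>s$ at most once, never back), the consequent bound $(\sigma_n^t)^{-1}(s)>(1-b/2)n$ for all $t$, the monotonicity of $g_t$ with $g_0\le (b/4)n$, and the persistence $L_t\subseteq L_{t+1}$ all hold, and the first two are also the starting point of the paper's proof. But the crux --- the linear clearing-time bound --- is missing, as you yourself concede, and the partial claims you make toward it are false as stated. Concretely: suppose at some time $t\ge 1$ positions $q-1,q,q+1$ carry values $w,v,u$ forming a maximal run with $w>s>v>u$, where $v$ is the frontier. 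Then $v$ is \emph{not} the minimum of its run, and the reversal $(w,v,u)\mapsto(u,v,w)$ leaves $v$ exactly where it was: the frontier value stalls (only the frontier \emph{position} advances, because the frontier identity switches to $u$). More seriously, even after repairing this by tracking the frontier position, your accounting does not close. The frontier position decreases by at least one per step only \emph{between} crossings; at a crossing the new frontier can sit at distance up to $bn/2$ behind $s$, and there can be up to $g_0\approx bn/4$ crossings, so without the convoy statement the naive bound is quadratic, not $(b/2)n+g_0$. The convoy statement itself is genuinely delicate because non-frontier elements of $R_t$ enjoy no per-element monotonicity: a run $(x,u')$ with $u'<x<s$ pushes $x$ one position to the \emph{right}, so values waiting behind the frontier can drift backwards while the frontier advances.

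The paper closes exactly this gap by replacing the single-frontier potential with a global domination argument. It introduces a deterministic exclusion process: particles initially occupy all positions in $[(1-b/2)n+1,n]$; the first step is idle, and at each later step every particle with a hole immediately to its left moves one position left. Writing $R'_t$ for the particle-occupied positions in $[(1-b/2)n+1,n]$, it proves by induction the majorization invariant $R_t\le R'_t$ (for every $r$, the $r$-th largest position held by an element of $R_t$ is at most the $r$-th largest position in $R'_t$), where the inductive step is a local case analysis of the kind you attempted, using Lemma~\ref{lem:1/2+}. Since a full block of $bn/2$ particles provably evacuates the interval in exactly $bn$ steps, $R_{bn}=\emptyset$ follows, i.e.\ $s\in L_{bn}$. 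Treating all elements of $R_t$ simultaneously through a partial order on position sets --- rather than one frontier at a time --- is precisely the missing ingredient in your plan, and it is also why the worst-case initial configuration (the interval packed solid) can be analyzed exactly instead of amortizing over crossings.
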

\begin{proof}
Fix $b$ and $s$ as above. To begin with, $\sigma_n^{-1}(s)$ cannot reach a position smaller than $(1-b/2) n + 1$ in any of $(\sigma_n^t)_{t\ge 1}$ since at most $bn/4-1$ integers can jump over $\sigma_n^{-1}(s)$ from left to right. Now, for every $t\ge 0$, define $R_t$ as the set of integers that are smaller than $s$ and on the right of $s$ in $\sigma_n^t$. 

We define an auxiliary process (which is a deterministic version of the asymetric simple exclusion process) as follows. In the beginning, put particles on each of the largest $bn/2$ integers in $[n]$. We say that a position with a particle is \emph{occupied} while a position without a particle is a \emph{hole}. At the first step, we stay with the same configuration of particles. At each of the next steps, for every pair of consecutive positions such that the first is a hole and the second is occupied, move the particle in the occupied position to the hole (that is, one position to the left). Then, for every $t\ge 0$, denote by $R'_t$ the set of positions among $[(1-b/2)n+1, n]$ that are occupied after $t$ steps. 

Now, for two sets $A,B\subseteq [n]$, we say that $A \le B$ if $|A|\le |B|$ and for every $r\in [|A|]$, the $r$-th largest element in $A$ is smaller than or equal to the $r$-th largest element in $B$. This defines a partial order on the family of subsets of $[n]$. 

For every $t\ge 0$ and $r\in [|R_t|]$, denote by $\ell_{r,t}$ the $r$-th largest element in $R_t$. Note that for every fixed $r$, $\ell_{r,t}$ is decreasing in $t$ because the sequence of sets $(R_t)_{t\ge 0}$ is itself decreasing for the order relation on the subsets of $[n]$. 

We show by induction that for every $t\ge 0$, $R_t \le R'_t$, or equivalently that for all $t\ge 0$ and $r\in [|R_t|]$, the positions $[\ell_{t,r}, n]$ contain at least $r$ particles of $R_t'$. The cases $t=0$ and $t=1$ are clear. Suppose that $R_{t-1} \le R'_{t-1}$ for some $t\ge 2$, and fix any $r\in [|R_{t-1}|]$. Then, if $|[\ell_{r,t-1}+1, n]\cap R_{t-1}'|\ge r$, one must have that $|[\ell_{r,t}, n]\cap R_t'|\ge r$ because $\ell_{r,t}\le \ell_{r,t-1}$ and every particle in the auxiliary process can move by at most one position to the left at step $t$. Suppose that $|[\ell_{r,t-1}+1, n]\cap R_{t-1}'| = r-1$, that is, $\ell_{r,t-1}$ is the $r$-th largest position in $R_{t-1}'$. We consider two cases. On the one hand, if $(\sigma_n^{t-1})^{-1}(\ell_{r,t-1}-1) < s$, then by the induction hypothesis $R_{t-1}'$ contains position $\ell_{r,t-1} - 1$, and therefore $\ell_{r,t-1}$ remains the $r$-th largest position in $R_t'$. Hence, using that $\ell_{r,t}\le \ell_{r,t-1}$ shows that $|[\ell_{r,t}, n]\cap R_t'|\ge r$. On the other hand, if $(\sigma_n^{t-1})^{-1}(\ell_{r,t-1}-1) \ge s$, then either $(\sigma_n^{t-1})^{-1}(\ell_{r,t-1}+1) < (\sigma_n^{t-1})^{-1}(\ell_{r,t-1})$, in which case the numbers in positions $\ell_{r,t-1}-1$ and $\ell_{r,t-1}+1$ exchange their positions at step $t$, or $(\sigma_n^{t-1})^{-1}(\ell_{r,t-1}+1) > (\sigma_n^{t-1})^{-1}(\ell_{r,t-1})$, in which case the numbers in positions $\ell_{r,t-1}-1$ and $\ell_{r,t-1}$ exchange their positions at step $t$. In both cases, the position of the $r$-th integer smaller than $s$ becomes $\ell_{r,t-1}-1$, which is equal to the position of the $r$-th particle in the auxiliary process after step $t$.



Finally, it is not hard to see that the auxiliary process needs $1+2\cdot bn/2 - 1 = bn$ steps to evacuate all particles from the interval $[(1-b/2)n+1, n]$ (where the first 1 comes from the fact that the auxiliary process does not evolve at all at the first step), which implies that $R_{bn} = R'_{bn} = \emptyset$, as desired.
\end{proof}




Now, fix $a\in (0,1)$ to be suitably chosen later. Then, by Lemma~\ref{lem:conc} a.a.s.\ there are $(a^2/16-o(1)) n$ integers $k\in [(1-a/4)n+1, n]$ such that $\sigma_n^{-1}(k)\in [(1-a/4)n+1, n]$. Denote by $m$ the smallest of these integers. At step $an$ of the Pop-Stack Sorting algorithm, we color the numbers larger or equal to $m$ in three colors that evolve dynamically. More precisely, for any $t\ge an$, a number $k\ge m$ is \emph{white} if it belongs to $L_t$ (in which case $(\sigma_n^t)^{-1}(m) \le  (\sigma_n^t)^{-1}(k)$), it is \emph{red} if $(\sigma_n^t)^{-1}(m) \le  (\sigma_n^t)^{-1}(k)$ but $k\notin L_t$, and \emph{black} otherwise. In particular, a number may be recolored from red to white, and from black to red or white. 

For every $t\ge an$, we denote by $p_t$ the largest position such that $\sigma_n^t$ contains two consecutive red numbers in positions $p_t - 1$ and $p_t$. If such a pair does not exist, we set $p_t = (\sigma_n^t)^{-1}(m)$. 

\begin{lemma}\label{lem:reds}
For every $t\ge an$, $p_{t+2}\le p_t$.
\end{lemma}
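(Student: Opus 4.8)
The plan is to exploit the local structure of $\sigma_n^t$ to the right of $p_t$ and follow it over two steps, using the fact recorded before the lemma that a white number stays white, i.e.\ that the right-to-left minima that are at least $m$ and lie to the right of $m$ are preserved by $\Pop$. Write $\cR$ for the set of positions larger than $p_t$. First I would establish that \emph{in $\cR$ the red numbers are exactly the tops of descents}. Indeed, if a position $q>p_t$ is red then, by maximality of $p_t$, position $q+1$ (which exists, since the rightmost entry is a right-to-left minimum and hence not red) cannot also be red, so it is white; and if $q,q+1$ were an ascent, then $\sigma_n^t(q)<\sigma_n^t(q+1)$ together with $q+1$ being a right-to-left minimum would force $q$ itself to be a right-to-left minimum, contradicting that $q$ is red. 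Hence no two consecutive positions of $\cR$ are descent tops, so every run whose top lies in $\cR$ has length at most $2$, the top of such a length-$2$ run being red and its bottom white. From this one reads off the dynamics in $\cR$: in one step each red moves one position to the right, each white either stays put or moves one position to the left, and the reds remain isolated (a new descent top can only be created just to the right of an old one and is always followed by a white). In particular no consecutive red pair is ever created strictly inside $\cR$.

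Next I would analyse the run $\rho$ containing position $p_t$. Since $p_t$ is red while $p_t+1$ is white, the same ascent/descent dichotomy shows $\sigma_n^t(p_t)>\sigma_n^t(p_t+1)$, so $\rho$ ends exactly at $p_t+1$, with white bottom $\sigma_n^t(p_t+1)$. Reversing $\rho$ therefore turns the block of positions it occupies, say $[s,p_t+1]$, into an increasing block $w_0<w_1<\dots<w_{L-1}$, the two largest values $w_{L-2},w_{L-1}$ landing in positions $p_t,p_t+1$. A short check (splitting on whether the run starting at $p_t+2$ has length $1$ or $2$) shows that, whatever happens there, position $p_t+2$ carries a white number at time $t+1$, and that everything from $p_t+2$ onward is governed by the clean bulk dynamics above.

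Finally I would resolve the situation over the second step. If $L=2$ the block is just $(p_t,p_t+1)$, and one sees directly that already at time $t+1$ the region to the right of $p_t$ is clean (isolated reds), so $p_{t+1}\le p_t$. If $L\ge 3$ a consecutive red pair may genuinely appear at $p_t+1$ at time $t+1$, so that $p_{t+1}$ can exceed $p_t$ and two steps are really needed. The key observation is that the increasing block freezes: its entries except the rightmost sit in ascending order, hence form length-$1$ runs and do not move, so at time $t+2$ the values $w_0,\dots,w_{L-2}$ still occupy $[s,p_t]$. Only the rightmost descent can act: if $w_{L-1}$ exceeds the white at $p_t+2$ it slides one step right to $p_t+2$ while that white slides left into $p_t+1$; otherwise the entire block already lies below everything to its right and is all white. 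In the first case the white now at $p_t+1$ separates the frozen block from $w_{L-1}$, and the clean bulk dynamics to the right (together with the permanence of right-to-left minima) force position $p_t+3$ to carry a white number at time $t+2$, so $w_{L-1}$ is isolated as well. In all cases every red strictly to the right of $p_t$ is isolated at time $t+2$, which gives $p_{t+2}\le p_t$.

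The main obstacle is precisely this long-run case: reversing a run of length $\ge 3$ ending at $p_t+1$ can dump large values at positions $p_t,p_t+1$ and create a fresh consecutive red pair one position too far to the right, so the inequality simply cannot hold after a single step. The heart of the argument is to show that these dumped values cannot keep a consecutive red pair to the right of $p_t$: the sorted block freezes and a right-to-left minimum is forced to slide in between the two offending entries, which is exactly what the preservation of white numbers provides.
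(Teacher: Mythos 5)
Your treatment of the main case is correct and is, at heart, the same two-step local analysis as the paper's: reds strictly to the right of $p_t$ are isolated descent tops that march one position right per step and stay isolated, the run through $p_t$ ends at the white in position $p_t+1$, and a newly created pair at $(p_t,p_t+1)$ is destroyed at the following step when that white slides back in between. The one real difference is technical. The paper first invokes Lemma~\ref{lem:1/2+} (runs in the image of $\Pop$ have length at most $3$) to reduce to exactly two configurations---the run through $p_t$ is $\{p_t,p_t+1\}$ or $\{p_t-1,p_t,p_t+1\}$, according to the relative order of the two reds---and uses that lemma again to rule out a long run crossing the white at $p_t$ during the second step. Your frozen-increasing-block argument handles a run of arbitrary length $L$ and never needs Lemma~\ref{lem:1/2+}, so on this point your proof is self-contained and slightly more general; this is a modest but genuine gain. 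One small inaccuracy: you claim $w_0,\dots,w_{L-2}$ still occupy $[s,p_t]$ at time $t+2$, but $w_0$ need not stay at $s$---a run arriving from the left can end at $s$ and pull $w_0$ leftward, replacing it by a larger entry. Since $w_0<w_1$ forces any such run to stop at $s$, the interior entries $w_1,\dots,w_{L-2}$ are indeed frozen and your conclusion is unaffected.

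There is, however, one case your write-up does not cover: the definition of $p_t$ includes the fallback $p_t=(\sigma_n^t)^{-1}(m)$ when no consecutive red pair exists, and the lemma is asserted (and later used, in Lemma~\ref{lem:queue}) for that case as well. Your analysis opens with ``$p_t$ is red while $p_t+1$ is white,'' which is false in the fallback case: there the entry at position $p_t$ is $m$ itself, which is white. The paper at least flags this case explicitly (dismissing it as a similar analysis); a complete proof must do the same. The fix is your own machinery shifted by one position: the run ending at $p_t$ has the white $m$ as its bottom; its reversal sends $m$ leftward and dumps an ascending block of formerly black entries into the vacated positions up to $p_t$, possibly creating a red pair at $(p_t-1,p_t)$ but nothing further right, so $p_{t+1}\le p_t$; the block then freezes, and at the next step its top entry, if red, swaps with the white at $p_t+1$ and lands there isolated, giving $p_{t+2}\le p_t$.
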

\begin{proof}
Fix $t\ge an$. To begin with, every red number in position larger than $p_t$ in $\sigma_n^t$ is surrounded by white numbers, so at the next step it moves by one position to the right and remains surrounded by white numbers. We concerntrate on the case $p_t > (\sigma_n^t)^{-1}(m)$, as the case $p_t = (\sigma_n^t)^{-1}(m)$ follows by a similar case analysis.

There are two possibilities for the red number at position $p_t$. If the red number on its left is smaller than itself, $(\sigma_n^t)^{-1}(p_t)$ makes one step to the right and hence $p_{t+1}\le p_t - 1$. Thus, using that $(\sigma_n^{t+1})^{-1}(p_t)$ is white in this case, Lemma~\ref{lem:1/2+} implies that $p_{t+2}\le p_{t+1}+1\le p_t$. If the red number in position $p_t-1$ is larger than the one in position $p_t$, $(\sigma_n^t)^{-1}(p_t-1)$ moves two positions to the right at step $t+1$ so $p_{t+1} = p_t + 1$. Then, $(\sigma_n^{t+1})^{-1}(p_t) < (\sigma_n^{t+1})^{-1}(p_t+1)$ and $(\sigma_n^{t+1})^{-1}(p_t+2)$ is white, so $p_{t+2} = p_{t+1}-1 = p_t$, which finishes the proof.
\end{proof}

\begin{lemma}\label{lem:queue}
Fix $a\in (0, 0.5)$, $c\in (a, 0.5)$ and $\eps > 0$ such that $\eps \log(\eps^{-1})+2\eps < \tfrac{c-a}{2}$ and $\eps < \tfrac{a^2}{16}$. Then, a.a.s.\ $p_{cn} \le (1-\eps)n$.
\end{lemma}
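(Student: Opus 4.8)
The plan is to follow, from time $an$ onward, the red numbers that occupy the top window of positions $\big((1-\eps)n,n\big]$, to show that this population is small at time $an$ and is depleted at rate roughly one per two steps, and thus empties before time $cn$. First I set up the initial picture. Applying Lemma~\ref{lem:conc} with $b=1-a/4$, a.a.s.\ the chosen set of integers $\ge m$ with starting position in $[(1-a/4)n+1,n]$ has size $(a^2/16-o(1))n$, and by Lemma~\ref{lem:L_k} with $b=a$ each of them, $m$ included, lies in $L_{an}$, i.e.\ is white at time $an$. Since a white number is a right-to-left minimum and every red number lies to the right of $m$, all red numbers at time $an$ sit at positions at least $(\sigma_n^{an})^{-1}(m)\ge m\ge (1-a/4)n+1$. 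Moreover at least $a^2n/16-1>\eps n$ of these white values exceed $m$ and lie to the right of $m$; a short argument (a value exceeding $m$ and lying to the right of the right-to-left minimum $m$ can never cross back to its left, because $m$ never tops a run containing it and it never descends past $m$) shows that for every $t\ge an$ at least $\eps n$ values exceed $m$ and sit to its right, so $(\sigma_n^t)^{-1}(m)\le (1-\eps)n$. Recalling that $p_t\ge (\sigma_n^t)^{-1}(m)$, with equality when no red pair exists, it therefore suffices to prove that $\sigma_n^{cn}$ has no two consecutive red numbers at positions $>(1-\eps)n$: this forces $p_{cn}\le (1-\eps)n$ in both remaining cases (no red pair at all, or a rightmost red pair at position $\le(1-\eps)n$).

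Next I introduce the potential $\Phi_t$, the number of red numbers at positions $>(1-\eps)n$ in $\sigma_n^t$. When $p_t>(1-\eps)n$ there is a red pair in the top window, so $\Phi_t\ge 2$. Applying the case analysis in the proof of Lemma~\ref{lem:reds} to the rightmost red pair, over two steps that pair is resolved: the larger red either becomes a right-to-left minimum (turning white) or is carried upward and absorbed, in either case decreasing the number of reds in the top window by at least one, while by Lemma~\ref{lem:reds} the quantity $p_t$ does not climb over two steps. One must also check that reds do not enter the top window from below faster than they leave it: the rightmost red-pair position is a non-increasing front by Lemma~\ref{lem:reds}, and new reds are created only around the position $(\sigma_n^t)^{-1}(m)\le (1-\eps)n$ of $m$ (as blacks overtake $m$), so the inflow into $\big((1-\eps)n,n\big]$ is controlled. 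Together these give $\Phi_{t+2}\le \Phi_t-1$ whenever $p_t>(1-\eps)n$.

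It remains to bound $\Phi_{an}$, and this is where Lemma~\ref{lem:Y} enters. A red number in the top window at time $an$ is a value $\ge m$ lying to the right of $m$ that fails to be a right-to-left minimum, hence has a strictly smaller value somewhere to its right, while occupying a position beyond $(1-\eps)n$ and thus having at most $\eps n$ values to its right in total. The intended estimate is that such a high red at time $an$ can be charged to an initial position of $\sigma_n$ having at most $\eps n$ larger entries to its right, that is, a position counted by $Y$; adding the at most $\eps n$ positions of the top window itself gives $\Phi_{an}\le Y+\eps n$, so that Lemma~\ref{lem:Y} yields a.a.s.\ $\Phi_{an}\le (\eps\log(\eps^{-1})+2\eps+o(1))n$. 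Making this charging precise, namely identifying exactly which initial positions of $\sigma_n$ can feed a high red at time $an$ and showing the correspondence is essentially injective (so that the dynamic count $\Phi_{an}$ is genuinely dominated by the static statistic $Y$), is the step I expect to be the \textbf{main obstacle}.

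Finally I combine the pieces. By hypothesis $\eps\log(\eps^{-1})+2\eps<(c-a)/2$, so a.a.s.\ $2\Phi_{an}<(c-a)n$. Starting from $\Phi_{an}$ and decreasing by at least one every two steps as long as the top window still contains a red pair, $\Phi$ reaches a configuration with no red pair beyond $(1-\eps)n$ after fewer than $(c-a)n$ steps, i.e.\ at some time at most $an+2\Phi_{an}<cn$. From that moment on, Lemma~\ref{lem:reds} prevents $p$ from rising back above $(1-\eps)n$, so in particular $p_{cn}\le (1-\eps)n$, which is the assertion of the lemma. The role of the remaining hypothesis $\eps<a^2/16$ is exactly the bookkeeping in the first paragraph: it guarantees more than $\eps n$ white values above $m$, pinning the position of $m$ below $(1-\eps)n$ throughout.
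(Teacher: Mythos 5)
There is a genuine gap, and it sits exactly where your argument replaces the paper's: the depletion claim $\Phi_{t+2}\le\Phi_t-1$ is unproven and in fact false. First, resolving the rightmost red pair does not remove a red from the window $\big((1-\eps)n,n\big]$ nor turn one white: in both cases of the analysis in Lemma~\ref{lem:reds}, the two reds merely rearrange (one of them is pushed one or two positions further right and remains red, inside the window), so there is no guaranteed ``outflow'' of one red per two steps. Second, the inflow is not controlled by your remark that new reds are created only near $m$: that remark concerns recoloring (black to red), but says nothing about the migration of reds that already exist at positions between $(\sigma_n^t)^{-1}(m)$ and $(1-\eps)n$. Such reds, when isolated, move right by one position at every step (they swap with the white to their right), so they cross into the window at a rate of up to one per step. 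In a steady stream of such arrivals, the pair at $p_t$ is refreshed by each newcomer --- this is precisely the stationary case $p_{t+2}=p_t$ in Lemma~\ref{lem:reds} --- and $\Phi$ can stay constant or grow for as long as the stream lasts, while $p_t$ does not move. Consequently, bounding the single snapshot $\Phi_{an}$ (your third paragraph, which you yourself flag as the main obstacle) cannot suffice even if completed: what must be bounded is the total number of distinct integers that ever enter the window during $[an,cn]$, not the number present at time $an$.

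The paper's proof bounds exactly that total, by a global counting argument rather than a potential-function argument. If $p_{cn}>(1-\eps)n$, then since $(p_{an+2u})_u$ is decreasing (Lemma~\ref{lem:reds}) and can decrease by at most $\eps n$ in total without violating $p_{cn}>(1-\eps)n$, at least $(\tfrac{c-a}{2}-\eps)n$ of the two-step periods in $[an,cn]$ are stationary, $p_{t+2}=p_t$. Each stationary period forces some integer to jump from left to right past position $p_t-1$, landing in position $p_t-1$ or $p_t$; it is a.a.s.\ red (here $\eps<a^2/16$ and Lemma~\ref{lem:conc} enter, to place $(\sigma_n^t)^{-1}(m)$ below $p_t$), and since reds to the right of $p_t$ are isolated and only drift right, no integer can play this role twice. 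Hence at least $(\tfrac{c-a}{2}-\eps)n$ distinct integers visit $[(1-\eps)n+1,n]$ at some point. The bridge to $Y$ --- the observation missing from your charging step --- is that for any integer $k$, integers larger than $k$ can cross $k$ only from left to right (inside a run the entries decrease, so anything that crosses $k$ leftward is smaller than $k$); thus the number of integers larger than $k$ to its right never decreases with time, and an integer occupying a position beyond $(1-\eps)n$ at any time had fewer than $\eps n$ larger integers to its right already in $\sigma_n$, i.e., its initial position is counted by $Y$. This yields $Y\ge(\tfrac{c-a}{2}-\eps)n$, contradicting Lemma~\ref{lem:Y} under the hypothesis $\eps\log(\eps^{-1})+2\eps<\tfrac{c-a}{2}$. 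Your first paragraph (pinning $(\sigma_n^t)^{-1}(m)\le(1-\eps)n$ for all $t\ge an$) is fine and parallels the paper, but the core of your argument would need to be replaced by this global count.
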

\begin{proof}
Consider the sequence $(p_{an+2u})_{u=0}^{(c-a)n/2}$ starting with $p_{an}$ and ending with $p_{cn}$. By Lemma~\ref{lem:reds} this sequence is decreasing. Suppose that $p_{cn} > (1-\eps)n$. Then, there are at least $(\tfrac{c-a}{2} - \eps)n$ steps $t\in [an, cn-2]$ such that $p_t = p_{t+2}$.

On the other hand, from the case analysis in the proof of Lemma~\ref{lem:reds} it follows that a.a.s.\ for every $t$ such that $p_t = p_{t+2}$, there is an integer that jumped from left to right between $p_t-1$ and $p_t$ at step $t$ or $t+1$. Moreover,  a.a.s.\ this integer is red since a.a.s.\ $p_t > (1-\tfrac{a^2}{16}+o(1))n\ge (\sigma_n^t)^{-1}(m)$, and in position $p_t - 1$ or $p_t$ at time $t+1$. Since every integer can do this at most once (recall that red integers with position in the interval $[p_t+1, n]$ at time $t+1$ are surrounded by white integers, and thus move one position to the right), this means that at least $(\tfrac{c-a}{2} - \eps)n$ integers visited the interval $[(1-\eps)n+1, n]$ at some point in the algorithm. In particular, with the notation of Lemma~\ref{lem:Y}, $Y\ge (\tfrac{c-a}{2} - \eps)n$. Choosing $\eps$ such that $\eps \log(\eps^{-1}) + 2\eps < \tfrac{c-a}{2}$ and using Lemma~\ref{lem:Y} shows that a.a.s.\ the above event does not happen and proves the lemma.
\end{proof}

Now, we are ready to show Theorem~\ref{thm 1}.

\begin{proof}[Proof of Theorem~\ref{thm 1}]
Define $N$ as in Observation~\ref{ob:1/2} and set $a = 0.32$, $c = 0.48$ and $\eps_0 = 0.0061$. Then, we have that after $cn$ steps, a.a.s.\ the position of the number $N$ is at most $(2c+o(1))n$. 

On the other hand, $\eps_0 \log(\eps_0^{-1}) + 2\eps_0 < \tfrac{c-a}{2}$ and $\eps_0 < a^2/16$, and therefore Lemma~\ref{lem:queue} implies that $p_{cn}\le (1-\eps_0)n$. Thus, since $2c + \eps_0 < 1$, the number $N$ must move by only one position to the right during its last $\eps_0 n - N$ moves (as do all red integers once they jump over $p_t$ for some $t\ge cn$) before reaching its position in $\Id_n$. Thus, in total, $N$ needs at least $(1/2+\tfrac{\eps_0}{2} - o(1))n$ steps to get in place. Using that $\tfrac{\eps_0}{2} > 0.003$ shows the theorem. 
\end{proof}

\section{Concluding remarks}

In this short note, we answer positively to an open question on the running time of the Pop-Stack Sorting algorithm on a random permutation. The proof is based on a precise analysis of the algorithm on a segment containing the last $\eps n$ positions for some small (but fixed) value of $\eps$. While the approach may be optimized further to ensure a slightly larger constant, resolving Conjecture~\ref{conj 1} remains currently out of reach. Indeed, if the conjecture is true, a more thorough treatment of the early steps of the process will be needed to confirm it. Our work does not provide further insights on this point: note that our understanding of the early steps of the process does not surpass the (purely combinatorial) Lemma~\ref{lem:L_k}.

\paragraph{Acknowledgements.} The author is grateful to Marcos Kiwi and Dieter Mitsche for sharing the problem. Thanks are also due to Colin Defant and Dieter Mitsche for several useful remarks and suggestions.



\bibliography{Refs}
\bibliographystyle{plain}






\end{document}